\theoremstyle{plain}
\newtheorem{lem}[subsection]{Lemma}
\newtheorem{cor}[subsection]{Corollary}
\newtheorem{thm}[subsection]{Theorem}
\theoremstyle{definition}
\theoremstyle{remark}
\newtheorem{rmk}[subsection]{Remark}
\numberwithin{equation}{section}
\newcommand{\eq}[2]{\begin{equation}\label{#1}#2 \end{equation}}
\newcommand{\mlnl}[1]{\begin{multline*}#1 \end{multline*}}
\newcommand{\arir}{\ar@{^{(}->}}
\newcommand{\aril}{\ar@{_{(}->}}
\newcommand{\are}{\ar@{>>}}
\newcommand{\xr}[1] {\xrightarrow{#1}}
\newcommand{\inj}{\hookrightarrow}
\newcommand{\surj}{\twoheadrightarrow}
\newcommand{\ul}{\underline}
\newcommand{\Div}{{\rm div}}
\newcommand{\Spec}{{\rm Spec \,}}
\newcommand{\Tr}{{\rm Tr}}
\newcommand{\Res}{{\rm Res}}
\newcommand{\sE}{{\mathcal E}}
\newcommand{\sF}{{\mathcal F}}
\newcommand{\sM}{{\mathcal M}}
\newcommand{\sN}{{\mathcal N}}
\newcommand{\sO}{{\mathcal O}}
\newcommand{\G}{{\mathbb G}}
\newcommand{\Z}{\mathbb{Z}}
\newcommand{\fm}{\mathfrak{m}}
\begin{document}

\title[$K$-groups of reciprocity functors]
{$K$-groups of reciprocity functors \\ 
for $\mathbb{G}_a$ and abelian varieties}


\author{Kay R\"ulling \and Takao Yamazaki}
\date{\today}
\address{Freie Universit\"at Berlin, Arnimallee 3, 14195 Berlin, Germany}
\email{kay.ruelling@fu-berlin.de}
\address{Mathematical Institute, Tohoku University,
  Aoba, Sendai 980-8578, Japan}
\email{ytakao@math.tohoku.ac.jp}

\begin{abstract}
We prove that the $K$-group of  reciprocity functors,
defined by F. Ivorra and the first author,
vanishes over a perfect field as soon as one of the reciprocity functors is
$\G_a$ and one is an abelian variety.
\end{abstract}

\keywords{Reciprocity functor, Chow group, $K$-group}
\subjclass[2010]{19E15 (14F42, 19D45, 19F15)}
\thanks{ The first author is supported by the ERC Advanced Grant 226257,
the second author is supported by JSPS KAKENHI Grant (22684001, 24654001).}
\maketitle

\section{Introduction}
We work over a perfect base field $k$
of characteristic $p \geq 0$.
Stemming from B. Kahn's idea \cite{KahnRF},
F. Ivorra and the first author developed
a theory of {\em reciprocity functors} in \cite{IvRu}.
We recall relevant facts from \cite{IvRu} in \S \ref{sect:review} below.
Here we only mention a reciprocity functor
restricts to a covariant functor $\sE_k \to ({\rm Ab})$,
where $\sE_k$ is the category of
finitely generated field extensions of $k$.
Here are examples of reciprocity functors:
\begin{itemize}
\item[(i)]
A commutative algebraic group $G$ over $k$
can be regarded as a reciprocity functor.
For any $L \in \sE_k$,
$G(L)$ is the group of $L$-rational points of $G$.
\item[(ii)]
Let $X$ be a smooth projective $k$-scheme.
We then have a reciprocity functor $\ul{CH}_0(X)$
such that
\[ \ul{CH}_0(X)(L) = CH_0(X \times_k L)
\text{ for any $L \in \sE_k$}.
\]
This construction is covariant functorial in $X$.
(The assumption on $X$ can be relaxed,
see \S \ref{ex:rf} below.)
\item[(iii)]
For $X$ as in (ii), we define 
\begin{equation*}
\ul{CH}_0(X)^0 := \ker(\pi_* : \ul{CH}_0(X) \to \ul{CH}_0(\Spec k_X)),
\end{equation*}
where $k_X :=H^0(X, \sO_X)$
and $\pi : X \to \Spec k_X$ is the canonical map.
If $X$ is connected and admits a zero-cycle of degree one,
then we have a decomposition
$\ul{CH}_0(X) \cong \ul{CH}_0(X)^0 \oplus \Z$
where
$\Z := \ul{CH}_0(\Spec k)$. 
\end{itemize}
Given a family $\sM_1, \dots, \sM_n$ of reciprocity functors,
a new reciprocity functor $T(\sM_1, \dots, \sM_n)$,
called the {\em $K$-group of reciprocity functors},
is constructed in \cite{IvRu}.
The nature of this construction is 
illustrated by the following examples:
\begin{itemize}
\item
Let $L \in \sE_k$.
If we take $\sM_1=\dots=\sM_n=\G_m$, 
then $T(\G_m, \dots, \G_m)(L)$
is isomorphic to 
the Milnor $K$-group $K_n^M(L)$
\cite[5.3.3]{IvRu}.
Suppose moreover $k$ is of characteristic zero.
If we take $\sM_1=\G_a, ~\sM_2=\dots=\sM_n=\G_m$, 
then $T(\G_a, \G_m, \dots, \G_m)(L)$
is isomorphic to 
the group of K\"ahler differentials $\Omega_{L/\Z}^{n-1}$
\cite[5.4.7]{IvRu}.
\item 
If $p \not= 2$,
then we have
$T(\G_a, \G_a, \sM_1, \dots, \sM_n)=0$
for any reciprocity functors $\sM_1, \dots, \sM_n$.
\item 
Let $X_1, \dots, X_n$ be smooth projective schemes over $k$
of pure dimensions $d_1, \dots, d_n$ and let $r \in \Z_{\geq 0}$.
Then we have an isomorphism for any $L \in \sE_k$
\[ T(\ul{CH}_0(X_1), \dots, \ul{CH}_0(X_n), \G_m, \dots, \G_m)(L)
\cong CH^{d+r}(X \times_k L, r).
\]
where $X:=X_1 \times_k \dots \times_k X_n, ~d:=d_1+\dots+d_n$
and we put $r$ copies of $\G_m$ on the left hand side
\cite[5.2.5]{IvRu}.
\end{itemize}

Our main result is the following theorem.

\begin{thm}\label{thm:main}
Let $k$ be a perfect field and
let $\sM_1,\ldots, \sM_n$ be reciprocity functors.
For any smooth projective $k$-scheme $X$ 
and for any {\em perfect} $L \in \sE_k$, we have
\begin{equation}\label{eq:vanish}
T(\G_a, \ul{CH}_0(X)^0,\sM_1,\ldots, \sM_n)(L)=0.
\end{equation}
\end{thm}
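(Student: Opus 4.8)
The plan is to reduce the vanishing to two inputs: the known vanishing $T(\G_a,\G_a,\dots)=0$ when $p\neq 2$ (and a separate argument in characteristic $2$, or an argument uniform in $p$), together with a structural decomposition of $\ul{CH}_0(X)^0$. The first step is to observe that $T(-,-,\dots)$ is additive in each slot, so it suffices to treat $X$ connected; and since $L$ is perfect one may base-change and assume $X$ has an $L$-rational point after a finite extension, reducing to the case where $\ul{CH}_0(X)^0$ is the reduced group. The key geometric input I would invoke is the generalized Albanese: for $X$ smooth projective connected over a perfect field there is a map $\ul{CH}_0(X)^0 \to \mathrm{Alb}_X$ to a (semi-)abelian variety, and more importantly a \emph{decomposition of the diagonal} / Bloch–Srinivas-type argument showing that, after inverting nothing, $\ul{CH}_0(X)^0$ is built from $\G_a$'s, tori, abelian varieties, and "higher" pieces that themselves are handled by induction. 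Concretely I expect one uses that over a perfect field $\ul{CH}_0(X)^0(L)$ for perfect $L$ is divisible-by-$p$ in a suitable sense and its "motivic" constituents in the category of reciprocity functors are: $\G_a$ (only in char $p$), an abelian variety $A$, and a finite group scheme, so that by multilinearity $T(\G_a,\ul{CH}_0(X)^0,\dots)$ is a sum of terms $T(\G_a,\G_a,\dots)$, $T(\G_a,A,\dots)$, and $T(\G_a,(\text{finite}),\dots)$.

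Next I would dispose of each constituent. The term $T(\G_a,\G_a,\sM_1,\dots,\sM_n)$ vanishes for $p\neq 2$ by the result quoted in the introduction; for $p=2$ one needs the separate computation that $\G_a$ is "squarefree" as a Mackey-type object in this theory, which should follow from the symbol relations in the definition of $T$ together with the fact that $2=0$ forces the relevant symmetric/alternating identifications. The finite-group term $T(\G_a,F,\dots)$ vanishes because $\G_a$ is uniquely divisible prime-to-$p$ and $p$-divisible on perfect fields is false—rather, here one uses that $F(L)$ is torsion while $T$ lands in a group on which the relevant primes act invertibly via the $\G_a$-slot (one exploits that multiplication by $N$ on $\G_a$ composed with the symbol relations kills $N$-torsion). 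That leaves the essential case $T(\G_a, A, \sM_1,\dots,\sM_n)(L)=0$ for $A$ an abelian variety, which I expect to be the main obstacle and the technical heart of the paper.

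For the abelian-variety case the strategy is to use the reciprocity-functor structure of $A$: by Rosenlicht / Serre the local symbol of $A$ is trivial in a strong sense (an abelian variety has no nontrivial "differential" contribution, i.e. its local symbol kills everything coming from $\G_a$ because $\G_a$ has only additive local symbols with poles while $A$ has none). Precisely, I would compute $T(\G_a, A, \sM_1,\dots,\sM_n)(L)$ using the explicit presentation of the $K$-group of reciprocity functors from \cite{IvRu}: generators are symbols $\{a\otimes \alpha \otimes m_1\otimes\cdots\}_{Z/L}$ attached to curves $Z$, and relations come from the reciprocity law $\sum_{x\in Z}(\text{local symbol at }x)=0$. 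The crux is that for a section $a$ of $\G_a$ and a section $\alpha$ of $A$ over a curve $Z$, the local symbol pairing $(a,\alpha)_x$ is identically zero because on the one hand it is additive and annihilated by large pole orders from the $\G_a$ side, and on the other hand $A$, being proper, contributes no "ramification"; this is a Mackey/transfer argument combined with the classification of local symbols on $\G_a$ (essentially residues of differentials) and the absence thereof on abelian varieties. Making this precise—identifying exactly which relations in the presentation of $T$ collapse the $\G_a\otimes A$ symbols to zero, and checking this is compatible with the extra slots $\sM_1,\dots,\sM_n$ and with the perfectness hypothesis (needed so that $\Omega^1$-type obstructions vanish, since over a perfect field $\Omega^1_{L/k}$ behaves well and there are no inseparable pathologies)—is where the real work lies, and I would expect to spend most of the argument there, likely by an induction on $n$ that peels off the $\sM_i$ one at a time and ultimately reduces to the base case $n=0$, i.e. $T(\G_a,A)(L)=0$, which itself is the statement that the "Somekawa-type" $K$-group $K(k;\G_a,A)$ over a perfect field is trivial.
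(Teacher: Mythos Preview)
Your proposal has a genuine gap at its structural core: the claimed decomposition of $\ul{CH}_0(X)^0$ into copies of $\G_a$, tori, abelian varieties, and finite group schemes does not exist. The Albanese map $\ul{CH}_0(X)^0 \to \mathrm{Alb}_X$ has a kernel (the Albanese kernel) which, by Mumford's theorem, is in general an enormous non-representable object, certainly not expressible as a finite combination of the building blocks you list. So even if you proved $T(\G_a,A,\ul{\sM})(L)=0$ for abelian varieties $A$, you would still be left with $T(\G_a,\ker(\mathrm{alb}),\ul{\sM})(L)$, and nothing in your outline addresses that piece. The finite-group argument you sketch is also broken in characteristic $p$: $\G_a$ is $p$-torsion there, so the divisibility trick you invoke does not kill $p$-torsion finite pieces. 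Finally, $T(\G_a,\G_a,\dots)=0$ is only known for $p\neq 2$, and there is no easy patch for $p=2$ along the lines you suggest.

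The paper goes in the opposite direction and avoids all of this. It first proves the theorem for $X$ a smooth projective \emph{curve} by a direct local-symbol computation: one writes an arbitrary degree-zero zero-cycle $\zeta=\sum n_i x_i$, uses approximation to produce $f\in k(X)^\times$ with $v_{x_i}(f)=n_i$, and then uses Serre duality (the residue exact sequence for $\Omega^1_{k(X)/k}$) to manufacture an auxiliary element $g\in k(X)$ such that the reciprocity law $\sum_x (ag\otimes \eta\otimes \pi^*\ul{m},f)_x=0$ in $T(\G_a,\ul{CH}_0(X),\ul{\sM})$ collapses to $a\otimes\zeta\otimes\ul{m}=0$ after a five-case analysis of the local symbols. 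Perfectness of $L$ enters to control the 4th case via $p$-th roots. The higher-dimensional case is then reduced to curves by a Bertini-type argument (any degree-zero zero-cycle on $X$ is pushed forward from a connected one-dimensional closed subscheme). In particular, the abelian-variety statement is a \emph{corollary} of the theorem, via the Albanese map $\ul{CH}_0(A)^0\to A$, not an input to it.
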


It was suggested by B. Kahn that one should have $T(\G_a, A)=0$ for an abelian variety $A$.
The above theorem immediately implies the following result,
which in particular proves this conjecture in characteristic zero.

\begin{cor}\label{cor-to-thm:main}
Under the same assumption as Theorem \ref{thm:main},
for any abelian variety $A$ and for any perfect $L \in \sE_k$
we have 
\begin{equation}\label{eq:vanish-ab}
T(\G_a, A, \sM_1,\ldots, \sM_n)(L)=0.
\end{equation}
\end{cor}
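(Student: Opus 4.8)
The plan is to realise the reciprocity functor attached to $A$ as a direct factor of $\ul{CH}_0(A)^0$ and then to invoke Theorem~\ref{thm:main} with $X=A$.

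An abelian variety $A$ is smooth and projective over $k$ and carries its origin $0\in A(k)$, so $\ul{CH}_0(A)^0$ is defined as in~(iii). First I would check that the two natural maps
\[
\iota\colon A\lra \ul{CH}_0(A)^0,\qquad \rho\colon \ul{CH}_0(A)^0\lra A
\]
are morphisms of reciprocity functors. The map $\iota$ is the one induced, over each field $F$, by $a\mapsto [a]-[0]$; this zero-cycle has degree $0$, hence lies in $\ul{CH}_0(A)^0(F)=\ker(\mathrm{deg})$. The map $\rho$ is the Albanese morphism of $A$, available as a morphism of reciprocity functors through the constructions of~\cite{IvRu} (on a closed point with residue field $F'/F$ it is given by the norm map $A(F')\to A(F)$). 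Since for an abelian variety one has $\mathrm{Alb}_{A/k}=A$, with the identity serving as the universal morphism to an abelian variety once $0$ is taken as base point, the universal property defining the Albanese gives $\rho\circ\iota=\id_A$.

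Next I would feed the identity $\id_A=\rho\circ\iota$ into the $K$-group construction $T(\G_a,-,\sM_1,\ldots,\sM_n)$, which is functorial in each of its entries. This produces morphisms of reciprocity functors $\iota_*$ and $\rho_*$ with $\rho_*\circ\iota_*=\id$ on $T(\G_a,A,\sM_1,\ldots,\sM_n)$; in other words $T(\G_a,A,\sM_1,\ldots,\sM_n)$ is a retract of $T(\G_a,\ul{CH}_0(A)^0,\sM_1,\ldots,\sM_n)$. Evaluating at a perfect $L\in\sE_k$ and applying Theorem~\ref{thm:main} with $X=A$, the latter group vanishes, hence so does the former; this is~\eqref{eq:vanish-ab}.

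The one step that is not purely formal is the construction of this splitting: one must know that $\iota$ and $\rho$ are morphisms of reciprocity functors in the full sense --- compatible with transfers and residues on the whole category underlying the definition, not merely natural as functors on $\sE_k$. I expect this, rather than the concluding deduction, to be the main point; granting it, the classical identification $\mathrm{Alb}_A\cong A$ supplies the retraction essentially for free.
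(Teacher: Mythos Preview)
Your retraction strategy hits a genuine obstruction, not merely a technicality awaiting verification. The map $\iota\colon A(F)\to CH_0(A_F)^0$, $a\mapsto[a]-[0]$, is \emph{not additive} once $\dim A\ge 2$: the defect
\[
\iota(a+b)-\iota(a)-\iota(b)=[a+b]+[0]-[a]-[b]
\]
is exactly the Pontryagin product $([a]-[0])*([b]-[0])$ in $CH_0(A_F)$, which lies in the Albanese kernel and is nonzero in general (e.g.\ over sufficiently large fields, by Mumford-type infinite-dimensionality results for $CH_0$). So $\iota$ is only a set-theoretic section of $\rho$, not a homomorphism of abelian groups, hence certainly not a morphism of reciprocity functors; there is no induced $\iota_*$ on $T$-groups and no retraction.

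The paper sidesteps this by using only the Albanese map $\rho\colon\ul{CH}_0(A)^0\to A$, which \emph{is} a morphism of reciprocity functors, and arguing that the induced $\rho_*$ on $K$-groups is surjective. After the standard reduction via Remark~\ref{rem:reduction-to-k} and \S\ref{sect:k-rf}~(ii), a generator $a\otimes\zeta\otimes\ul{m}$ of $T(\G_a,A,\ul{\sM})(k)$ with $\zeta\in A(k)$ is visibly $\rho_*\bigl(a\otimes([\zeta]-[0])\otimes\ul{m}\bigr)$, and the source vanishes by Theorem~\ref{thm:main}. In effect this is your $\iota$ applied symbol-by-symbol rather than as a functor---which is all that is needed, and all that is actually available.
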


\begin{rmk}
\begin{enumerate}
\item
If $p=0$,
we can apply Theorem \ref{thm:main} for any $L \in \sE_k$
and hence
$T(\G_a, A,\sM_1,\ldots, \sM_n)=0$
as a reciprocity functor.
If $p>0$,
however,
we cannot get the same conclusion,
because
we do not know if \eqref{eq:vanish} holds
when $L$ is an imperfect field.
\item
In Theorem \ref{thm:main}, 
assume additionally that $X$  has a zero-cycle of degree one, then we obtain 
\[T(\G_a, \ul{CH}_0(X),\sM_1,\ldots, \sM_n)(L)=T(\G_a, \sM_1,\ldots, \sM_n)(L).\]
This result should be compared with \cite[Thm 4.5]{H}, where Hiranouchi proves 
 a similar statement for his $K$-groups. Notice that the definition of his $K$-groups and the $K$-groups of
reciprocity functors recalled above are completely different and so are the proofs.
\item
Theorem \ref{thm:main} can be extended to
more general $X$.
See Theorem \ref{thm:general} for details.
\end{enumerate}
\end{rmk}

After a brief review of \cite{IvRu} in \S \ref{sect:review},
we prove Theorem \ref{thm:main}
when $\dim X=1$ in \S \ref{sect:curves}.
This is the main step in the proof.
The general case of Theorem \ref{thm:main} 
and Corollary \ref{cor-to-thm:main}
will be deduced from this case
by a Bertini-type theorem in \S \ref{sect:highdim}.

\subsection*{Acknowledgement}
The authors thank Bruno Kahn for his comments on a first draft of this note.
Thanks are also due to the referee for many helpful comments.

\section{Review of reciprocity functors}
\label{sect:review}

\subsection{Generalities}
Let ${\rm Reg}^{\le 1}$ be the category of
regular $k$-schemes of dimension $\leq 1$, 
which are separated and of finite type over some $L \in \sE_k$.
The category having the same objects as 
${\rm Reg}^{\le 1}$  but 
finite correspondences as morphisms
will be denoted by ${\rm Reg}^{\le 1}{\rm Cor}$.
A reciprocity functor $\sM$ is a 
contravariant functor $\sM : {\rm Reg}^{\le 1}{\rm Cor} \to ({\rm Ab})$
satisfying various conditions
which we do not recall here.
For details, we refer the readers to \cite{IvRu}.
Here we recall a few properties of reciprocity functors.

Before stating them, we introduce a few notations.
Let $C \in {\rm Reg}^{\le 1}$ be a connected
regular projective curve over $L \in \sE_k$
and $x \in C$ a closed point.
We write $v_x$ for the normalized discrete valuation 
on $L(C)$ defined by $x$
and we put $U_{C, x}^{(n)} := 
\{ f \in L(C)^{\times} ~|~ v_x(f-1) \geq n \}$
for each $n \in \Z_{>0}$.
Let $\sM$ be a reciprocity functor.
We define $\sM_{C, x} := \underset{\longrightarrow}{\lim} \sM(U)$, 
where the limit is taken over all open neighborhoods $U$ of $x$,
and write 
\begin{equation}\label{eq:specializaition}
s_x : \sM_{C, x} \to \sM(x)
\qquad
(\text{resp.}~ \Tr_{x/L} : \sM(x) \to \sM(L) := \sM(\Spec L))
\end{equation}
for the map induced 
by pull-back along the embedding $x \to U$ for each $U$
(resp. by push-forward along $x \to \Spec L$).
When $x=\Spec L'$, we also write $\Tr_{L'/L}=\Tr_{x/L}$.

\subsection{Injectivity}
Let $\sM$ be a reciprocity functor
and 
let $C \in {\rm Reg}^{\le 1}$ be connected
with generic point $\eta$.
The definition of reciprocity functors imposes
the induced map $\sM(C) \to \sM(U)$ to be injective
for any non-empty open subset $U \subset C$.
Hence we may view 
$\sM(C)$ as a subgroup of $\sM(\eta)$.

\subsection{Local symbol}
Let $\sM$ be a reciprocity functor
and let $C \in {\rm Reg}^{\le 1}$ be a
smooth projective geometrically connected curve over $L \in \sE_k$.
Then for all closed points $x \in C$ 
there exists a biadditive pairing
called the {\em local symbol}
\begin{equation}\label{eq:local-symbol}
 (-,-)_x : \sM(L(C)) \times L(C)^\times \to \sM(L)
\end{equation}
which has the following properties
\cite[1.5.3]{IvRu}:
\begin{itemize}
\item
For all $m \in \sM(L(C))$,
there exists $n \in \Z_{>0}$ such that
$(m, U_{C, x}^{(n)})_x=0$.
\item
For all $m \in \sM_{C, x}$
and $f \in L(C)^\times$, we have
$(m, f)_x = v_x(f) \Tr_{x/L}(s_x(m))$.
\item
For all $m \in \sM(L(C))$ and $f \in L(C)^\times$,
we have
\begin{equation}\label{eq:reciprocity}
 \sum_{x \in C} (m, f)_x = 0.
\end{equation}
\end{itemize}
Using the local symbol, we define
${\rm Fil}^0_x \sM(L(C)) := \sM_{C, x}$ and
for $n \in \Z_{>0}$
\begin{equation}\label{eq:def-filtrarion}
{\rm Fil}^n_x \sM(L(C)) := 
\{ m \in \sM(L(C)) ~|~ (m, f)_x = 0 ~\text{for all }
f \in U_{C, x}^{(n)} \}.
\end{equation}

\subsection{$K$-group of reciprocity functors}\label{sect:k-rf}
Let $\sM_1, \dots, \sM_n$ be reciprocity functors.
The $K$-group of reciprocity functors
$T(\sM_1, \dots, \sM_n)$
is a reciprocity functor 
constructed in \cite[4.2.4]{IvRu}.
We shall need the following properties of 
$T(\sM_1, \dots, \sM_n)$.
\begin{itemize}
\item[(i)]
Let $L \in \sE_k$. There is a multi-additive map
\[ \sM_1(L) \times \dots \times \sM_n(L) \to 
T(\sM_1, \dots, \sM_n)(L).
\]
The image of 
$(m_1, \dots, m_n) \in \sM_1(L) \times \dots \times \sM_n(L)$
is denoted by
$m_1 \otimes \dots \otimes m_n \in T(\sM_1, \dots, \sM_n)(L)$.
\item[(ii)]
$T(\sM_1, \dots, \sM_n)(L)$ is generated by
the elements of the form
\begin{equation}\label{eq:generate}
 \Tr_{L'/L}(m_1' \otimes \dots \otimes m_n') 
\end{equation}
where $L'$ ranges all finite extensions of $L$
and
$m_i' \in \sM_i(L') ~(i =1, \dots, n)$.
Take such an $L'$ and $j \in \{ 1, \dots, n \}$.
For any $m_i \in \sM_i(L) ~(i \not= j)$ and $m_j' \in \sM_j(L')$,
we have a {\it projection formula}
\begin{equation}\label{eq:proj-formula}
 \Tr_{L'/L}(\pi^* m_1 \otimes \dots \otimes m_j' 
  \otimes \dots \otimes \pi^* m_n)
 = m_1 \otimes \dots \otimes \Tr_{L'/L}(m_j') \otimes \dots \otimes m_n
\end{equation}
where 
$\pi$ denotes the natural map $\Spec L' \to \Spec L$.
\item[(iii)]
Let $C \in {\rm Reg}^{\le 1}$ be a
smooth projective geometrically connected curve over $L \in \sE_k$,
$x\in C$ a closed point and $r \in \Z_{\geq 0}$.
If $m_i \in {\rm Fil}_x^r \sM_i(L(C))$ for $i=1, \dots, n$,
then we have
\begin{equation}\label{eq:conti}
m_1 \otimes \dots \otimes m_n \in 
{\rm Fil}^r_x T(\sM_1, \dots, \sM_n)(L(C)).
\end{equation}
Further, if $r=0$, then in  $T(\sM_1, \dots, \sM_n)(k(x))$ we have
\[s_x(m_1\otimes\ldots\otimes m_n)=s_x(m_1)\otimes\ldots \otimes s_x(m_n).\]
\end{itemize}

\subsection{Examples of reciprocity functors}\label{ex:rf}
As was recalled in the introduction,
a commutative algebraic group defines a reciprocity functor
\cite[2.2.2]{IvRu}.
Also, 
a homotopy invariant Nisnevich sheaves with transfers
$\sF$ (in the sense of \cite{voepre})
defines a reciprocity functor \cite[2.3.5]{IvRu}
which we denote by the same letter $\sF$.
We shall apply this construction
to $\sF=\ul{CH}_0(X)$ appearing in the following result.
\begin{thm}[{Huber-Kahn, \cite[2.2]{HK}}]\label{thm:huber-kahn}
Let $X$ be a scheme separated and of finite type over $k$.
We assume one of the following conditions:
\begin{enumerate}
\item $X$ is smooth and projective over $k$.
\item $X$ is of dimension $\leq 2$.
\item $k$ is of characteristic zero.
\end{enumerate}
Then there is 
a homotopy invariant Nisnevich sheaf with transfers
$\ul{CH}_0(X)$ such that
for any connected smooth scheme $U$ over $k$
\[ \ul{CH}_0(X)(U)= CH_0(X \times_k k(U)).
\]
\end{thm}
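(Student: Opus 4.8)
The overall strategy is to treat the smooth projective case first, realizing $\ul{CH}_0(X)$ as a Suslin homology sheaf, and then to reduce the general case to it by dévissage; the three hypotheses (1)--(3) are precisely what make this reduction available.

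I would begin with case (1). Let $\Z_{\tr}(X)$ be the Nisnevich sheaf with transfers represented by $X$, with $\Z_{\tr}(X)(U)=\mathrm{Cor}(U,X)$ the group of finite correspondences, and set
\[ \ul{CH}_0(X) := \ul{H}_0\big(C_*\Z_{\tr}(X)\big), \]
the zeroth homology of Voevodsky's singular complex. By the theory of presheaves with transfers over the perfect field $k$ \cite{voepre}, this is automatically a homotopy invariant Nisnevich sheaf with transfers. To identify its sections, note first that for $F\in\sE_k$ one has $\ul{CH}_0(X)(\Spec F)=H_0^{\mathrm{Sing}}(X_F)$, which the Suslin--Voevodsky comparison theorem identifies with $CH_0(X_F)$ since $X_F$ is smooth and proper. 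It then remains to pass from the generic point to an arbitrary smooth connected $U$ with function field $K=k(U)$, i.e. to prove the birational invariance $\ul{CH}_0(X)(U)=\ul{CH}_0(X)(\Spec K)=CH_0(X_K)$.

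This birational invariance is the heart of the matter and the step I expect to be the main obstacle. The inclusion $\ul{CH}_0(X)(U)\inj\ul{CH}_0(X)(K)$ holds because any homotopy invariant Nisnevich sheaf with transfers over a perfect field embeds into its generic stalk \cite{voepre}; the content is surjectivity. A zero-cycle class on $X_K$ spreads out to a finite correspondence over a dense open $V\subseteq U$, whose closure in $U\times X$ is proper but only generically finite over $U$, and one must show that, modulo the homotopy relation defining $\ul{H}_0(C_*-)$, it can be replaced by an honest finite correspondence over $U$; equivalently, that the contraction $\big(\ul{CH}_0(X)\big)_{-1}$ vanishes, so that the associated Gersten complex degenerates and the sections over $U$ collapse to the generic value. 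This is a moving lemma for zero-cycles on the proper variety $X$, in the spirit of Suslin's rigidity arguments.

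Finally, for cases (2) and (3) I would reduce to the smooth projective case by dévissage on $\dim X$. Resolution of singularities --- for surfaces in every characteristic (case (2), Abhyankar--Lipman) and in all dimensions in characteristic zero (case (3), Hironaka) --- provides a resolution of $X$ together with a smooth projective compactification, and the localization sequence $CH_0(\bar X\setminus X)\to CH_0(\bar X)\to CH_0(X)\to 0$ together with the standard functoriality of $CH_0$ expresses $CH_0(X_K)$ in terms of smooth projective data of strictly smaller dimension. Lifting these sequences to the abelian category of homotopy invariant Nisnevich sheaves with transfers, I would define $\ul{CH}_0(X)$ as the corresponding cokernel; the remaining tasks are to verify independence of the chosen resolution and compactification and that the sections again compute $CH_0(X_K)$, both of which follow from the established smooth projective case by induction on dimension.
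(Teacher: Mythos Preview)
The paper does not prove this statement; it is quoted verbatim as a result of Huber and Kahn \cite[2.2]{HK} and no argument is given. There is therefore nothing in the present paper against which to compare your proposal.

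For context, your outline for case (1) is essentially the right construction: $\ul{CH}_0(X)=h_0(X):=\ul{H}_0(C_*\Z_{\tr}(X))$, and the birational invariance is equivalent to $(h_0(X))_{-1}=0$, which for smooth proper $X$ follows from the decomposition $M(X\times\P^1)\simeq M(X)\oplus M(X)(1)[2]$ in $DM^{\mathrm{eff}}$. The actual proof in \cite{HK}, however, is organised differently from your d\'evissage: Huber and Kahn work inside $DM^{\mathrm{eff}}_{\mathrm{gm}}(k)$ and realise $\ul{CH}_0(X)$ as the bottom slice of (a twist and shift of) the motive with compact support $M^c(X)$; the hypotheses (1)--(3) enter because these are precisely the cases in which $M^c(X)$ is known to exist in $DM^{\mathrm{eff}}_{\mathrm{gm}}(k)$ and to satisfy the required localisation triangles and duality. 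Your alternative route via explicit resolutions and cokernels in the abelian category of homotopy invariant Nisnevich sheaves with transfers is plausible in spirit, but be aware of a gap in the last step: a cokernel of sheaves does not in general compute sections as a cokernel of groups, so the claim that your inductively defined $\ul{CH}_0(X)$ has the prescribed value on every smooth $U$ would still require an acyclicity argument that you have not supplied.
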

Suppose $X$ is proper over $k$
and put $k_X :=H^0(X, \sO_X)$.
The kernel of 
$\ul{CH}_0(X) \to \ul{CH}_0(\Spec k_X)$
(induced by the pushforward along the canonical map $X \to \Spec k_X$)
is denoted by $\ul{CH}_0(X)^0$.

\begin{rmk}\label{rmk:sp-eta}
Let $C$ be a connected smooth projective curve over $k$
with generic point $\eta$.
We can view $\eta$ canonically as a zero-cycle on $C_{\eta}$, 
i.e. $\eta \in CH_0(C_{\eta}) = \ul{CH}_0(C)(\eta)$. 
Since the restriction map
$\ul{CH}_0(C)(U) \to \ul{CH}_0(C)(\eta)$
is an isomorphism
for any open dense subscheme $U \subset C$,
we have $\sM_{C, x}=\ul{CH}_0(C)(\eta)$
and
\begin{equation}\label{eq:sx-eta-x}
s_x(\eta)=x \quad \text{in}~ \ul{CH}_0(C)(x)=CH_0(C \times_k x)
\end{equation}
(see \eqref{eq:specializaition})
for any closed point $x \in C$.
\end{rmk}

\begin{rmk}\label{rem:reduction-to-k}
Let $L\in \sE_k$ be a perfect field. Given a reciprocity functor $\sN$ over $k$,
we denote by $\sN_L$ the reciprocity functor over $L$
obtained by restricting $\sN$.

Let $\sM_1, \dots, \sM_n$ be reciprocity functors over $k$.
Then it follows directly from the universal property of the $K$-groups
see \cite[4.2.4]{IvRu} that we have a natural surjection of reciprocity functors over $L$
\[T(\sM_{1, L}, \dots, \sM_{n, L})
\surj 
T(\sM_1, \dots, \sM_n)_L,\]
sending a symbol $m_1\otimes\ldots\otimes m_n$ viewed as an element in the left hand side
to the same symbol viewed as an element in the right hand side.
In fact it is not hard to show that this map is an isomorphism. But we do not need this in the following
and therefore leave the proof to the interested reader.
 \end{rmk}

\begin{lem}\label{lem:surj}
Let $\sM_1, \dots, \sM_n$ be reciprocity functors.
For any finite field extension $L/k$,
the trace map
\[\Tr_{L/k}: T(\G_a,\sM_1, \dots, \sM_n)(L)
\to T(\G_a,\sM_1, \dots, \sM_n)(k)
\]
is surjective.
\end{lem}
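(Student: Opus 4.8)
\emph{Proof plan.} The plan is to reduce to the generators of $T(\G_a,\sM_1,\dots,\sM_n)(k)$ and to push each of them up to a suitable finite extension through the $\G_a$-coordinate. By \S\ref{sect:k-rf}(ii), the group $T(\G_a,\sM_1,\dots,\sM_n)(k)$ is generated by the elements $g=\Tr_{L'/k}(a\otimes m_1\otimes\dots\otimes m_n)$, where $L'/k$ is a finite field extension, $a\in\G_a(L')=L'$, and $m_i\in\sM_i(L')$ for $i=1,\dots,n$; hence it suffices to show that each such $g$ lies in the image of $\Tr_{L/k}$.

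Here is the mechanism I would use. Fix a finite field extension $L''/k$ admitting $k$-embeddings of both $L$ and $L'$ — for instance a factor of $L\otimes_k L'$, which is an étale $k$-algebra because $k$ is perfect — and write $\rho\colon\Spec L''\to\Spec L'$ for the induced map. Since $L''/L'$ is separable, the trace map $L''\to L'$ is surjective; as the transfer on $\G_a$ along a finite extension is precisely the field trace, we may choose $\tilde a\in\G_a(L'')$ with $\Tr_{L''/L'}(\tilde a)=a$. The projection formula \eqref{eq:proj-formula}, applied to the extension $L''/L'$ in the $\G_a$-coordinate, then gives
\[a\otimes m_1\otimes\dots\otimes m_n=\Tr_{L''/L'}\big(\tilde a\otimes\rho^*m_1\otimes\dots\otimes\rho^*m_n\big)\]
inside $T(\G_a,\sM_1,\dots,\sM_n)(L')$. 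Applying $\Tr_{L'/k}$ and using functoriality of transfers along the towers $\Spec L''\to\Spec L'\to\Spec k$ and $\Spec L''\to\Spec L\to\Spec k$, i.e.\ $\Tr_{L'/k}\circ\Tr_{L''/L'}=\Tr_{L''/k}=\Tr_{L/k}\circ\Tr_{L''/L}$, one concludes
\[g=\Tr_{L/k}\Big(\Tr_{L''/L}\big(\tilde a\otimes\rho^*m_1\otimes\dots\otimes\rho^*m_n\big)\Big),\]
which exhibits $g$ in $\im(\Tr_{L/k})$ and finishes the argument.

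The only substantial input beyond this formal bookkeeping is the surjectivity of the field trace $L''\to L'$, and this is exactly where perfectness of $k$ enters, forcing all finite extensions in play to be separable; it is also the one step special to $\G_a$, since for a general reciprocity functor one cannot expect to lift $a$ through a transfer. The remaining ingredients — that the transfer on $\G_a$ agrees with the field trace, and that transfers on $T(\G_a,\sM_1,\dots,\sM_n)$ are functorial in finite morphisms — are part of the formalism of \cite{IvRu} and should require no more than a pointer to the relevant statements there. I do not expect a genuine obstacle: the content of the lemma is precisely this soft reduction through the additive slot.
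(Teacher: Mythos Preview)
Your proposal is correct and follows essentially the same route as the paper: both arguments reduce to generators via \S\ref{sect:k-rf}(ii), lift the $\G_a$-entry through a separable trace (the paper uses $\Tr_{L\otimes_k L'/L'}$ while you pick a single field factor $L''$ of $L\otimes_k L'$, which is harmless), and conclude by the projection formula \eqref{eq:proj-formula} together with transitivity of transfers. The paper's version is just the terse one-line summary of what you have spelled out in detail.
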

\begin{proof}
Since $k$ is perfect the trace map 
$\Tr_{L \otimes_k k'/k'}:\G_a(L \otimes_k k')\to \G_a(k')$ is surjective 
for any field extension $k'/k$
(see \cite[Chap. III, Cor. 1 to Prop. 4]{weil}).
Hence the lemma follows from 
the projection formula \eqref{eq:proj-formula} 
and \S \ref{sect:k-rf} (ii).
\end{proof}

\begin{lem}\label{rem:base-change}
Assume $X$ is proper over $k$ and 
assume one of the conditions in Theorem \ref{thm:huber-kahn}.
For any finite field extension $L/k$, 
there is a canonical isomorphism
$(\ul{CH}_0(X)^0)_L \cong \ul{CH}_0(X\times_{k}L)^0$
(as reciprocity functors over $L$).
\end{lem}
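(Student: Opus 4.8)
The plan is to reduce the statement to its analogue without the superscript $0$, namely a canonical isomorphism
\[(\ul{CH}_0(X))_L \cong \ul{CH}_0(X\times_k L)\]
of reciprocity functors over $L$, and then to deduce the lemma by passing to kernels.

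For this isomorphism I would argue at the level of homotopy invariant Nisnevich sheaves with transfers, which is legitimate since $\ul{CH}_0(X)$ is one such sheaf (Theorem \ref{thm:huber-kahn}) and the associated reciprocity functor is produced from it by the construction of \cite[2.3.5]{IvRu}. First note that, $L/k$ being finite and $k$ perfect, $L$ is a perfect field and $L/k$ is separable; hence every smooth $L$-scheme is also a smooth $k$-scheme, and $X\times_k L$ satisfies over $L$ the same one of the three conditions of Theorem \ref{thm:huber-kahn} that $X$ satisfies over $k$ (projectivity together with smoothness, dimension $\le 2$, and characteristic zero are all preserved by the base change $\Spec L\to\Spec k$), so that $\ul{CH}_0(X\times_k L)$ is defined as a homotopy invariant Nisnevich sheaf with transfers over $L$. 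Now for a connected smooth $L$-scheme $U$, with function field $k(U)$ regarded as an $L$-algebra, Theorem \ref{thm:huber-kahn} gives
\[\ul{CH}_0(X)(U)=CH_0(X\times_k k(U))=CH_0((X\times_k L)\times_L k(U))=\ul{CH}_0(X\times_k L)(U),\]
and since a finite correspondence between smooth $L$-schemes is in particular a finite correspondence over $k$, these identifications are compatible with the transfer maps; they therefore assemble into an isomorphism between $\ul{CH}_0(X\times_k L)$ and the restriction of $\ul{CH}_0(X)$ along $\mathrm{Sm}/L\to\mathrm{Sm}/k$, as homotopy invariant Nisnevich sheaves with transfers over $L$. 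As the construction of \cite[2.3.5]{IvRu} is compatible with restriction of the base field and is covariant in proper morphisms of schemes, this yields the displayed isomorphism, naturally in $X$.

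To conclude, recall that flat base change along $\Spec L\to\Spec k$ gives $k_{X\times_k L}=H^0(X\times_k L,\sO)=k_X\otimes_k L$, so that $\Spec k_{X\times_k L}=\Spec k_X\times_k L$. Applying the previous step to $X$ and to $\Spec k_X$, and using naturality for the canonical morphism $X\to\Spec k_X$, we obtain a commutative square of reciprocity functors over $L$
\[\xymatrix{
(\ul{CH}_0(X))_L \ar[r]\ar[d]_{\cong} & (\ul{CH}_0(\Spec k_X))_L \ar[d]^{\cong}\\
\ul{CH}_0(X\times_k L) \ar[r] & \ul{CH}_0(\Spec k_X\times_k L)
}\]
whose horizontal arrows are the pushforwards along the canonical maps. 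Since kernels of morphisms of reciprocity functors are formed sectionwise and the restriction functor $(-)_L$ is likewise sectionwise, taking kernels of the two rows gives the desired canonical isomorphism $(\ul{CH}_0(X)^0)_L\cong\ul{CH}_0(X\times_k L)^0$.

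The point that demands the most care is the middle one: checking that the sectionwise identification above really is an isomorphism of sheaves \emph{with transfers} over $L$, not merely of Nisnevich sheaves, and that passing to the associated reciprocity functors commutes with the restriction $(-)_L$. Both facts are essentially built into the definitions of the transfer structure on $\ul{CH}_0$ and of the functor in \cite[2.3.5]{IvRu}, so the verification is routine; everything after it, in particular the kernel computation in the last step, is formal.
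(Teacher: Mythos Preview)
Your argument is correct and follows the same route as the paper: first identify $(\ul{CH}_0(X))_L$ with $\ul{CH}_0(X\times_k L)$ via the formula of Theorem~\ref{thm:huber-kahn}, then use base change for $H^0(X,\sO_X)$ to match the targets of the two degree maps, and conclude by taking kernels. The paper compresses the first step to ``by definition'' and cites \cite{Mumford} for the cohomology base change where you invoke flatness, but the structure is identical.
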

\begin{proof}
We have $\ul{CH}_0(X)_L = \ul{CH}_0(X\times_{k}L)$
by definition.
Since
$H^0(X, \sO_X) \otimes_k L \cong H^0(X \times_k L, \sO_{X \times_k L})$
(see \cite[p. 53, Cor. 5]{Mumford}),
for any $U\in {\rm Reg}^{\le 1}$ over $L$
we have 
$k_X \times_k k(U) = (k_{X \times_k L}) \times_L k(U)$.
This implies
$\ul{CH}_0(\Spec k_X)_L = \ul{CH}_0(\Spec k_{X\times_{k}L})$,
and we are done.
\end{proof}

\section{Case of curves}
\label{sect:curves}

In this section, we prove
Theorem \ref{thm:main} assuming $\dim X=1$. 
In what follows we use the abbreviation 
$T(\G_a,\ul{CH}_0(X)^0,\ul{\sM}) 
:=T(\G_a,\ul{CH}_0(X)^0,\sM_1,\ldots, \sM_n)$. 
By Remark \ref{rem:reduction-to-k} and Lemmas \ref{lem:surj}, \ref{rem:base-change},
we may assume 
that $X$ has a $k$-rational point.
We may further assume that $X$ is connected.
Thus we have a splitting of reciprocity functors
$\ul{CH}_0(X)\cong \ul{CH}_0(X)^0\oplus \Z$,
which implies the injectivity of the natural map
\begin{equation}\label{eq:split}
T(\G_a,\ul{CH}_0(X)^0,\ul{\sM})
\hookrightarrow 
T(\G_a,\ul{CH}_0(X),\ul{\sM}).
\end{equation}
By \S \ref{sect:k-rf} (ii) and Remark \ref{rem:reduction-to-k},
it suffices to show that for any elements 
$a\in k\setminus\{0\}$, $m_i\in \sM_i(k)$ 
and any zero-cycle $\zeta=\sum_{i=1}^r n_i x_i\in Z_0(X)$ 
with $\sum_i n_i [k(x_i): k]=0$ we have
\[a\otimes\zeta\otimes m_1\otimes\ldots\otimes m_n=0 \quad \text{in }
T(\G_a,\ul{CH}_0(X),\ul{\sM})(k). 
\]
Note that we use $\ul{CH}_0(X)$ instead of $\ul{CH}_0(X)^0$.
This is justified by \eqref{eq:split}.

Given $N \in \Z$,
we write $(N, p)=1$ to mean 
``$N \not= 0$'' if $p=0$ and
``$N$ is prime to $p$'' if $p>0$.
Since $T(\G_a,\ul{CH}_0(X), \ul{\sM})(k)$ is annihilated by $p$,
we may assume $(n_i, p)=1$ for all $i$. Further we may assume  $n_1=1$. 
Else replace $\zeta$ by $\zeta+{\rm div}(h)$, where we choose $h\in k(X)^\times$ with $v_{x_1}(h)=n_1-1$.

By the Approximation Lemma we find a function $f\in k(X)^\times$ such that
\eq{11}{v_{x_i}(f)=n_i,\quad \text{for all } i=1,\ldots, r. }
Since $n_1=1$ the function $f$ is a local parameter at $x_1$ and since $k(x_1)/k$ is separable 
there exists an open neighborhood of $x_1$ in $X$ which is \'etale over $\Spec k[f]$.
In particular the function $f$ is a separating transcendence basis of $k(X)/k$ and thus 
$\Omega^1_{k(X)/k}=k(X)\frac{df}{f}$. 

For a closed point $x\in X$ set 
\[H^1_x:=\frac{\Omega^1_{k(X)/k}}{\Omega^1_{X,x/k}}.\] 
We have the residue map  $\partial_x: H^1_x\to k(x)$ at our disposal (cf. \cite[Chap. II]{Se}) and we denote by
$\Res_x=\Tr_{k(x)/k}\circ\partial_x : H^1_x\to k$ the composition with the trace. 
By duality theory (see e.g. \cite[III, Rmk 7.14]{Ha}) we have an exact sequence
\eq{12}{\Omega^1_{k(X)/k}\to \bigoplus_{x\in X} H^1_x\xr{\sum_x \Res_x} k \to 0. }
Let $\alpha=(\alpha_x)_{x\in X} \in \bigoplus_{x\in X} H^1_x$ be the element given by
\[\alpha_x=\begin{cases}  n_i \frac{d t_i}{t_i}, & \text{if } x=x_i, \text{ some } i\in \{1,\ldots, r\},\\
                                                 0,                     & \text{else}, \end{cases}\]
where $t_i\in \sO_{X,x_i}$ is a local parameter at $x_i$.

We have
\[{\rm Res}_{x_i}(n_i \frac{dt_i}{t_i})=n_i\cdot [k(x_i):k]\]
and hence 
\[\sum_{x\in X}\Res_x(\alpha)=\sum_i n_i[k(x_i):k]=0.\]
Thus by the exact sequence \eqref{12}
there exists a form $\omega=g \frac{d f}{f}\in \Omega^1_{k(X)/k}$, $g\in k(X)\setminus\{0\}$, with
\eq{14}{\omega-n_i \frac{d t_i}{t_i}\in \Omega^1_{X,x_i/k},\, i=1,\ldots, r,\text{ and }
 \omega\in \bigcap_{x\in X\setminus\{x_1,\ldots, x_r\} }\Omega^1_{X,x/k}.}
Let $x\in X$ be a closed point. Let $t\in\sO_{X,x}$ be a local parameter at $x$. 
Since $k(x)/k$ is separable 
we can identify the completion of $\sO_{X,x}$ as a $k$-algebra with the formal power series ring
$k(x)[[t]]$ and the completion of $\Omega^1_{X,x/k}$ as a $k$-module with $k(x)[[t]] dt$.
We also have a natural injection 
\[\Omega^1_{k(X)/k}=k(X)\otimes_{\sO_{X,x}}\Omega^1_{X,x/k}\inj 
k(X)\otimes_{\sO_{X,x}}\widehat{\Omega^1_{X,x/k}}= k(x)((t))dt.\]
We denote by $\pi: X\to \Spec k$ the structure map and write $\pi^*\ul{m}$ instead of
$\pi^*m_1\otimes\ldots\otimes \pi^*m_n$. 
Let $\eta\in X$ be the generic point of $X$
which we regard as an element of $\ul{CH}_0(X)(\eta)$, see Remark \ref{rmk:sp-eta}.
We shall compute the local symbol
$(ag\otimes\eta\otimes\pi^*\ul{m}, f)_x$ \eqref{eq:local-symbol}
for all closed points $x \in X$.
We consider five cases.

{\em 1st case: $x=x_i$ for some $i\in\{1,\ldots, r\}$.}
We have by \eqref{11}  $f= t_i^{n_i} u_i$ with $u_i\in \sO_{X,x_i}^\times$.
Thus \eqref{14} (and $(n_i,p)=1$) yields $g\in \sO_{X,x_i}^\times \text{ and } g(x_i)=1.$
Hence
\begin{align}
(ag\otimes\eta\otimes\pi^*\ul{m}, f)_{x_i}
& = v_{x_i}(f)\Tr_{x_i/k} s_{x_i}(ag\otimes \eta\otimes \pi^*\ul{m})\notag
\\
 &= n_i\Tr_{x_i/k}(a\otimes x_i\otimes \pi_i^*\ul{m})\notag
\\
 &=n_i ( a \otimes  x_i\otimes \ul{m}), \notag
\end{align}
where we denote by $\pi_i: x_i\to \Spec k$ the finite morphism induced by $\pi$, $s_{x_i}$ is the specialization map
at $x_i$ \eqref{eq:specializaition} and 
the last equality follows from the projection formula \eqref{eq:proj-formula}
and the fact that the pushforward 
$\pi_{i*}: \ul{CH}_0(X)(x_i)=CH_0(X_{x_i})\to CH_0(X)=\ul{CH}_0(X)(k)$ sends $x_i$ as a cycle on $X_{x_i}$ to $x_i$ viewed as a cycle on $X$.

{\em 2nd case: $x\in |\Div (f)|\setminus\{x_1,\ldots, x_r\}$ and $(v_x(f),p)=1$.}
If $f$ has a pole or a zero at $x\in X\setminus\{x_1,\ldots, x_r\}$ with $(v_x(f),p)=1$, then \eqref{14} implies $g\in\fm_x\subset \sO_{X,x}.$
Thus
\mlnl{(ag\otimes\eta\otimes \pi^*\ul{m}, f)_x= v_{x}(f) \Tr_{x/k}s_x(ag\otimes \eta\otimes \pi^*\ul{m})\\
=v_{x}(f) \Tr_{x/k}(ag(x)\otimes x\otimes s_x(\pi^*\ul{m}))=0.}

{\em 3rd case: $f\in\sO_{X,x}^\times$ and $g\in \sO_{X,x}$.}
In this case we clearly have
\[(ag\otimes\eta\otimes \pi^*\ul{m}, f)_x=0.\]

{\em 4th case: $f\in \sO_{X,x}^\times$ and $g$ has a pole at $x$.}
In $\widehat{\sO_{X,x}}=k(x)[[t]]$ we write 
\[g=\frac{u}{t^r},\quad \text{and}\quad f=a_0(\prod_{i=1}^r(1+a_i t^i))u_r,\] 
where $t\in \sO_{X,x}$ is a local parameter, $u\in \sO_{X,x}^\times$, $u_r\in 1+t^{r+1}k(x)[[t]]$,
$a_i\in k(x)$ and $r\ge 1$. 
For $i\in \{1,\ldots, r\}$ we claim that $a_i=0$ 
if $(i, p)=1$.
Else let $i_0$ be the smallest integer $i\in\{1,\ldots,r\}$  with $(i,p)=1$ and $a_i\neq 0$.
Then
\[\frac{df}{f}= \sum_{i=i_0}^r \frac{i a_i t^{i-1}}{1+a_it^i}dt+\frac{du_r}{u_r}.\]
Since $g\frac{df}{f}$ and $g\frac{du_r}{u_r}$ are regular we obtain 
\[\sum_{i=i_0}^r \frac{ia_iu}{1+a_it^i}\frac{t^{i-1}}{t^r}\in k(x)[[t]],\]
which is only possible if $i_0=0$; a contradiction. 
Since $k(x)$ is perfect we can therefore write $f$ in the form
\[  f=v^p w,\quad \text{with }v\in \sO_{X,x}^\times,\, w\in U_{X,x}^{(r+1)}.\]
Since
$ag\otimes\eta\otimes \pi^*\ul{m}\in{\rm Fil}^{r+1}_x T(\G_a,\ul{CH}_0(X),\ul{\sM})(\eta)$ by 
\eqref{eq:conti},
we get
\[(ag\otimes\eta\otimes \pi^*\ul{m}, f)_x= p(ag\otimes\eta\otimes \pi^*\ul{m}, v)_x
                                               + (ag\otimes\eta\otimes \pi^*\ul{m}, w)_x=0.\]

{\em 5th case: $x\in |\Div (f)|\setminus\{x_1,\ldots, x_r\}$ and $(v_x(f), p) \not=1$.}
In this case we can write $f=t^{v_x(f)}u$ with $t\in \sO_{X,x}$ a local parameter at $x$ and $u\in \sO_{X,x}^\times$ a unit. Then
\[(ag\otimes\eta\otimes \pi^*\ul{m}, f)_x= v_x(f)(ag\otimes\eta\otimes \pi^*\ul{m}, t)_x+
                                                           (ag\otimes\eta\otimes \pi^*\ul{m}, u)_x=0,\]
where the second summand on the right is zero by case 3 or 4.

All together the reciprocity law 
\eqref{eq:reciprocity}
in $T(\G_a,\ul{CH}_0(X),\ul{\sM})(k)$ yields
\begin{align}
0 &=\sum_{x\in X} (ag\otimes\eta\otimes\pi^*\ul{m}, f)_x\notag\\
  &= \sum_{i=1}^5 \sum_{x : \text{case i}}(ag\otimes\eta\otimes\pi^*\ul{m}, f)_x \notag\\
 & =  \sum_{i=1}^r a\otimes n_i x_i\otimes\ul{m}= a\otimes\zeta\otimes\ul{m}.\notag
\end{align}
This finishes the proof.
\qed

\begin{rmk}
\begin{enumerate}
\item
One could generalize the above result to the non-perfect field case, if one could positively answer the following:

{\em Question:}
Let $L \in \sE_k$, $X$ a regular irreducible projective curve over $L$ with
generic point $\eta\in X$, and $x\in X$ a closed point. 
Let $\sM$ be a reciprocity functor.
Is there  a well-defined morphism
\[\rho_{x/L}:\Omega^1_{\eta/L}\otimes_{\Z}\sM(X) \to
   T(\G_a, \sM)(\Spec L),
\]
which satisfies 
for all $a,a'\in \G_a(\eta), b\in \G_m(\eta)$ and $m\in \sM(X)$
\[ \rho_{x/L}((a\frac{db}{b}  + da')\otimes m)
   =(a\otimes m, b)_x \quad ?
\]
\item Actually the above question is only a particular instance of the question, whether for a reciprocity functor $\sM$
the symbol
\[(-,-)_x: \sM(\eta)\times\G_m(\eta)\to \sM(L)\] 
factors through a map $T(\sM,\G_m)(\eta) \to \sM(L)$. 
\end{enumerate}
\end{rmk}

\section{Completion of the proof}
\label{sect:highdim}

We prove the following result,
which implies Theorem \ref{thm:main} as a special case.

\begin{thm}\label{thm:general}
Let $k$ be a perfect field and
let $\sM_1,\ldots, \sM_n$ be reciprocity functors.
Let $X$ be a proper $k$-scheme and assume 
one of the conditions in 
Theorem \ref{thm:huber-kahn}
(so that $\ul{CH}_0(X)^0$ is well-defined).
Then for any {\em perfect} $L \in \sE_k$, we have
\begin{equation}\label{eq:vanish2}
T(\G_a, \ul{CH}_0(X)^0,\sM_1,\ldots, \sM_n)(L)=0.
\end{equation}
\end{thm}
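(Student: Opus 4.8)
The plan is to reduce \eqref{eq:vanish2} to the case $\dim X=1$ already established in \S\ref{sect:curves}; the one substantial new ingredient is a Bertini-type theorem that realises a zero-cycle on $X$ as the pushforward of a zero-cycle from a curve lying in $X$. First I would reduce the base field: since $X\times_k L$ again satisfies one of the conditions of Theorem \ref{thm:huber-kahn}, since $(\ul{CH}_0(X)^0)_L\cong\ul{CH}_0(X\times_k L)^0$ (the proof of Lemma \ref{rem:base-change} works for any field extension $L/k$), and since Remark \ref{rem:reduction-to-k} gives a surjection $T((\G_a)_L,(\ul{CH}_0(X)^0)_L,\sM_{1,L},\ldots,\sM_{n,L})\surj T(\G_a,\ul{CH}_0(X)^0,\sM_1,\ldots,\sM_n)_L$, it suffices to treat the case $L=k$. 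Replacing $X$ by $X_{\rm red}$ does not change $\ul{CH}_0(X)$, and, passing to the connected components of $X$ and applying Chow's lemma, further routine reductions let me assume $X$ is an integral projective $k$-variety; then, exactly as in \S\ref{sect:curves} (base-changing to the residue field of a closed point and invoking Lemma \ref{lem:surj}), I may assume $X$ has a $k$-rational point, so $H^0(X,\sO_X)=k$. By \S\ref{sect:k-rf}(ii) it then remains to kill each generator $a\otimes\zeta\otimes m_1\otimes\ldots\otimes m_n$ of $T(\G_a,\ul{CH}_0(X)^0,\sM_1,\ldots,\sM_n)(k)$, where $a\in k$, $m_i\in\sM_i(k)$, and $\zeta=\sum_i n_ix_i\in Z_0(X)$ satisfies $\sum_i n_i[k(x_i):k]=0$; and, since only the class of $\zeta$ matters, I may move it by a rational equivalence so that its support lies in the smooth locus $X\setminus\Sing(X)$, which is dense because $k$ is perfect.

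Next I would cut down to a curve. Applying a Bertini-type theorem to the smooth quasi-projective variety $X\setminus\Sing(X)$ and the finite set $\{x_1,\ldots,x_r\}$, I obtain an integral curve $C\subset X$ that passes through every $x_i$ and is regular at each of them. Let $\phi\colon\widetilde C\to X$ be the normalisation $\widetilde C\to C$ followed by the inclusion $C\hookrightarrow X$: then $\widetilde C$ is a smooth projective curve over $k$, $\phi$ is finite, and $\phi$ is an isomorphism over a neighbourhood of each $x_i$, so $\zeta$ lifts to a cycle $\zeta'=\sum_i n_i\widetilde x_i\in Z_0(\widetilde C)$ with $\phi_*\zeta'=\zeta$ in $CH_0(X)$ and $k(\widetilde x_i)=k(x_i)$. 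Since $\widetilde C$ is integral and proper, $H^0(\widetilde C,\sO_{\widetilde C})$ is a field embedding into every $k(\widetilde x_i)$, whence $\sum_i n_i[k(x_i):k]=0$ forces $\sum_i n_i[k(\widetilde x_i):H^0(\widetilde C,\sO_{\widetilde C})]=0$, i.e.\ $\zeta'\in\ul{CH}_0(\widetilde C)^0(k)$. As the structure map of $\widetilde C$ factors both through $X\to\Spec k$ and through $\Spec H^0(\widetilde C,\sO_{\widetilde C})\to\Spec k$, functoriality of $\ul{CH}_0$ shows that $\phi_*$ carries $\ul{CH}_0(\widetilde C)^0$ into $\ul{CH}_0(X)^0$; hence, by covariant functoriality of the $K$-groups in the $\ul{CH}_0$-variable,
\[a\otimes\zeta\otimes m_1\otimes\ldots\otimes m_n=\phi_*\bigl(a\otimes\zeta'\otimes m_1\otimes\ldots\otimes m_n\bigr)\]
lies in the image of $T(\G_a,\ul{CH}_0(\widetilde C)^0,\sM_1,\ldots,\sM_n)(k)$, which vanishes by the case $\dim X=1$ of Theorem \ref{thm:main}. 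Since such elements generate, \eqref{eq:vanish2} follows.

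The step I expect to be the main obstacle is the construction of the curve $C$: I need an \emph{irreducible} curve through the prescribed closed points $x_1,\ldots,x_r$ that is moreover \emph{regular} at each of them, so that the residue fields $k(x_i)$ are not enlarged and $\zeta$ genuinely lifts. Over an infinite field this is classical --- intersect a sufficiently high Veronese re-embedding of $X$ with generic hyperplanes through the $x_i$ and invoke Bertini's irreducibility and smoothness theorems (the latter applies because $X$ is smooth at each $x_i$) --- but over a finite field (which, in positive characteristic, is the only remaining case since the reduction above forces $L/k$ to be finite) one must instead appeal to a Bertini theorem valid over finite fields, in the spirit of Poonen. The remaining ingredients --- the base-change reductions, the passage to an integral projective variety, and the functoriality of $\ul{CH}_0(-)^0$ and of the $K$-groups --- I expect to be routine.
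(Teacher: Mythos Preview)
Your outline is correct and follows the same overall strategy as the paper --- reduce to a curve via Bertini and invoke \S\ref{sect:curves} --- but the two executions differ in a way worth noting. You insist on an \emph{irreducible} curve $C$ that is \emph{regular at each $x_i$}, so that the lift $\zeta'$ to the smooth normalization $\widetilde C$ has the same residue fields and lands in $\ul{CH}_0(\widetilde C)^0$; this lets you jump directly to the smooth-curve case. The paper instead only produces a \emph{connected} one-dimensional closed subscheme $C\subset X$ containing the $x_i$, with no control on regularity or irreducibility (this is much cheaper: it follows from Altman--Kleiman's Bertini on an affine cover). The price is an extra step --- a separate ``singular curve'' lemma showing that, after a finite base change making the non-regular points of $C$ and their preimages $k$-rational, the normalization map $Z_0(\widetilde C)^0\to Z_0(C)^0$ is surjective --- before \S\ref{sect:curves} applies. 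For finite $k$ the paper avoids any Poonen-type input altogether: it passes to a $\Z_\ell$-extension $k'/k$, applies the infinite-field case there, and concludes by a standard norm argument (the $T$-group is $p$-torsion, hence has no $\ell$-primary part). So your route is more direct and bypasses the singular-curve lemma, at the cost of a sharper Bertini statement (and, over finite fields, genuinely heavier machinery); the paper's route keeps the geometric input elementary but pays with an extra reduction. Both are valid; your Chow's-lemma and moving-to-the-smooth-locus steps are indeed routine, and the degree-zero check for $\zeta'$ on $\widetilde C$ is exactly as you wrote.
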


\subsection{Case of curves}\label{sect:curves2}
In this subsection,
we prove Theorem \ref{thm:general} assuming $\dim X=1$.
For a separated $k$-scheme $X$ of finite type,
we write $Z_0(X)^0$ 
for the preimage of $CH_0(X)^0$ in 
the group of zero-cycles $Z_0(X)$ on $X$.

\begin{lem}\label{lem:push-surj}
Let $X$ be a reduced $1$-dimensional proper $k$-scheme
and let $\phi : \tilde{X} \to X$ be its normalization.
Let $Z \subset X$ be the set of non-regular points on $X$.
Assume that for any $z \in Z$ and $z' \in \phi^{-1}(z)$,
we have $k(z')=k(z)=k$.
Then the restriction
\[ 
\phi_* : Z_0(\tilde{X})^0 \to Z_0(X)^0.
\]
of the push-forward 
$\phi_* : Z_0(\tilde{X}) \to Z_0(X)$
is surjective.
\end{lem}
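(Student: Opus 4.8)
The plan is to reduce the surjectivity of $\phi_* : Z_0(\tilde X)^0 \to Z_0(X)^0$ to a purely local statement around the finitely many non-regular points $z \in Z$. Since $\phi$ is an isomorphism over $X \setminus Z$, any zero-cycle on $X$ supported away from $Z$ lifts uniquely to $\tilde X$, and the two notions of degree (on $X$ and on $\tilde X$) agree under $\phi_*$ for such cycles. So the only thing to check is that a degree-zero contribution concentrated at a point of $Z$ can be realized in the image.

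First I would recall that for a reduced proper curve $X$ over $k$, the degree map $\deg : Z_0(X) \to \Z^{\pi_0(X)}$ (one copy of $\Z$ per connected component) factors the definition: $Z_0(X)^0$ is by definition the preimage of $CH_0(X)^0 = \ker(\deg : CH_0(X) \to \Z^{\pi_0(X)})$, hence $Z_0(X)^0 = \{\zeta \in Z_0(X) : \deg_{X_j}(\zeta) = 0 \text{ for each component } X_j\}$. The analogous description holds on $\tilde X$, except that the normalization may disconnect components; still, $\phi$ induces a surjection $\pi_0(\tilde X) \to \pi_0(X)$, and a cycle of degree $0$ on each component of $X$ may fail to come from a cycle of degree $0$ on each component of $\tilde X$. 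This is exactly where the hypothesis on residue fields enters.

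Next, here is the construction. Let $\zeta \in Z_0(X)^0$. Write $\zeta = \zeta' + \sum_{z \in Z} n_z [z]$ where $\zeta'$ is supported on the regular locus; using the hypothesis $k(z) = k$, each $n_z \in \Z$. Lift $\zeta'$ canonically to $\tilde\zeta' \in Z_0(\tilde X)$ via the isomorphism over $X \setminus Z$. For each non-regular point $z$, pick one point $z' \in \phi^{-1}(z)$ (nonempty since $\phi$ is surjective and finite); by hypothesis $k(z') = k$, so $[z'] \in Z_0(\tilde X)$ has degree $1$ in its component, and $\phi_*[z'] = [k(z'):k(z)]\,[z] = [z]$. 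Set $\tilde\zeta := \tilde\zeta' + \sum_{z \in Z} n_z [z']$. Then $\phi_*\tilde\zeta = \zeta$ by construction. It remains to arrange that $\tilde\zeta$ can be chosen in $Z_0(\tilde X)^0$, i.e. that it has degree $0$ on every connected component $\tilde X_i$ of $\tilde X$. The naive $\tilde\zeta$ above need not satisfy this; but on each component $\tilde X_i$ that maps into a component $X_j$ of $X$, we are free to add a correction supported on the regular locus of $\tilde X_i$ (which pushes forward to a degree-zero-on-$X_j$ correction on $X$, hence does not spoil $\phi_*\tilde\zeta = \zeta$ up to adjusting $\zeta'$) so as to fix its degree. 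Summing the components mapping to a fixed $X_j$, the total degree is $\deg_{X_j}\zeta = 0$, so the needed corrections can be distributed among the $\tilde X_i$ over $X_j$; the hypothesis $k(z') = k$ guarantees there is always a rational point available on each $\tilde X_i$ (either in $\phi^{-1}(Z)$ or, if not, on the regular locus — here one uses that $\tilde X_i$ is a geometrically reduced curve over $k$ dominating $X_j$, so it has $k$-points on a dense open, or one simply works with the available closed points and their degrees) to absorb an integer's worth of degree.

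**The main obstacle** will be bookkeeping the interaction between the connected components of $\tilde X$ and those of $X$: one must show that a cycle with component-wise degree zero on $X$ lifts to one with component-wise degree zero on $\tilde X$, and this genuinely requires the residue-field hypothesis so that each $\tilde X_i$ has enough small-degree cycles to redistribute the total degree over a component of $X$ among its preimages. Once the combinatorial redistribution is set up correctly, the verification $\phi_* \tilde\zeta = \zeta$ is immediate from $\phi$ being an isomorphism over $X \setminus Z$ together with $[k(z'):k(z)] = 1$, and no deeper input is needed.
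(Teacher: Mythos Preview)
Your overall strategy---lift $\zeta$ na\"ively to some $\tilde\zeta$ with $\phi_*\tilde\zeta=\zeta$, then correct so that $\tilde\zeta$ has degree zero on every component of $\tilde X$---is sound, but the correction step as written has a genuine gap. A correction ``supported on the regular locus of $\tilde X_i$'' pushes forward \emph{isomorphically} to $X$ (since $\phi$ is an isomorphism there), so adding any nonzero such cycle changes $\phi_*\tilde\zeta$ and destroys $\phi_*\tilde\zeta=\zeta$. The phrase ``up to adjusting $\zeta'$'' does not help: $\zeta$ is fixed, and ``degree zero on $X_j$'' is far weaker than ``equal to zero as a cycle on $X_j$''. (The aside that a geometrically reduced curve over $k$ ``has $k$-points on a dense open'' is also false in general---think of a smooth conic without rational points---so that fallback is not available.)

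What actually works is to take the correction inside $K:=\ker\bigl(\phi_*\colon Z_0(\tilde X)\to Z_0(X)\bigr)$. Since $\phi$ is an isomorphism over $X\setminus Z$ and $[k(z'):k(z)]=1$ for every $z'\in\phi^{-1}(z)$, this kernel is generated by differences $[z'_1]-[z'_2]$ with $z'_1,z'_2\in\phi^{-1}(z)$ for some $z\in Z$. Each such generator has degree $+1$ on the component of $\tilde X$ containing $z'_1$ and $-1$ on the one containing $z'_2$. For a connected $X$, the irreducible components (i.e.\ the components of $\tilde X$) form a connected graph under ``meet at a point of $Z$'', so the image of $K$ under the degree map $Z_0(\tilde X)\to\Z^{\pi_0(\tilde X)}$ is exactly the degree-zero hyperplane, which contains $\deg(\tilde\zeta)$. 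Hence one can choose $\kappa\in K$ with $\tilde\zeta+\kappa\in Z_0(\tilde X)^0$ and $\phi_*(\tilde\zeta+\kappa)=\zeta$. This is the content of the paper's argument, which carries out the same redistribution explicitly by writing $x-mz$ as a telescoping sum $(x-mx_1)+m\sum_i(x_i-x_{i+1})$ along a chain of irreducible components joining $x$ to $z$, each summand being a degree-zero cycle supported on a single component of $\tilde X$ and hence manifestly liftable.
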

\begin{proof}
We may assume $X$ is connected.
If $X$ is regular, the statement is trivial.
Hence we suppose $Z \not= \emptyset$ and take $z \in Z$
so that $k(z)=k$ by assumption.
Then $Z_0(X)^0$ is generated by elements of the form
$x-mz$ for closed points $x\in X$ and $m:=[k(x):k]$. 
Since $X$ is connected there exist irreducible components 
$X_1,X_2,\ldots X_{n}$ of $X$
such that $x_0:=x\in X_1$, $x_n:=z\in X_n$ and $X_i\cap X_{i+1}\neq\emptyset$, for $1\le i \le  n-1$. Pick $x_i\in X_i\cap X_{i+1} \subset Z$. 
Then $x-mz
= (x-mx_1) + m \sum_{i=1}^{n-1}(x_i-x_{i+1})$ in $Z_0(X)^0$. 
Since the restriction of $\phi$ defines
an isomorphism $\tilde{X} \setminus \phi^{-1}(Z) \to X \setminus Z$,
and since $\phi$ is surjective, 
$(x-mx_1)$ and $(x_i-x_{i+1})$ are in the image of 
$\phi_{*|Z_0(\tilde{X})^0}$ for all $i$, hence so is $x-mz$.
\end{proof}

Now we prove Theorem \ref{thm:general} assuming $\dim X=1$.
Clearly we may assume that $X$ is reduced. Further, by Remark \ref{rem:reduction-to-k} and 
Lemmas \ref{lem:surj}, \ref{rem:base-change}, we may suppose 
that the assumption of Lemma \ref{lem:push-surj}
is satisfied and that $L=k$.
Then Lemma \ref{lem:push-surj} implies that
$\phi$ induces a surjection 
\[
T(\G_a,\ul{CH}_0(\tilde{X})^0,\ul{\sM})(k)
\to
T(\G_a,\ul{CH}_0(X)^0,\ul{\sM})(k),
\]
but $T(\G_a,\ul{CH}_0(\tilde{X})^0,\ul{\sM})(k)=0$
by the result of \S \ref{sect:curves}.
This completes the proof.
\qed

\subsection{General case}
We complete the proof of Theorem \ref{thm:general}. 

\begin{lem}\label{cor:bertini}
Suppose $k$ is an infinite field.
Let $X$ be a proper connected scheme over $k$
such that $\dim X \geq 2$.
Let $\alpha \in CH_0(X)^0$.
Then there exists
a closed $1$-dimensional connected subscheme
$C \subset X$ 
such that
$\alpha$ belongs to the image of
the push-forward
$CH_0(C)^0 \to CH_0(X)^0$.
\end{lem}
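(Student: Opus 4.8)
The plan is to realize $\alpha$ by an actual zero-cycle and then pass a well-chosen curve through its support. Write $\alpha = [\zeta]$ for some $\zeta \in Z_0(X)^0$, and let $S \subset X$ be the (finite) support of $\zeta$. First I would reduce to the case where $X$ is projective: since $X$ is proper over $k$ we may use Chow's lemma to get a projective $X' \to X$ which is an isomorphism over a dense open set; but in fact it is cleaner to simply work with a locally closed affine neighborhood and embed into projective space. The concrete mechanism I would use is the following: choose a locally closed embedding of an open $U \supset S$ into some $\P^N_k$, take the closure $\bar X$, and repeatedly cut $\bar X$ by general hyperplanes through the finitely many points of $S$. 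Each such hyperplane section through a finite set of points is still irreducible of one lower dimension away from the base points, by a Bertini-type argument valid over an infinite field (here the hypothesis that $k$ is infinite enters, so that the linear system of hyperplanes through $S$ is large enough to apply Bertini and to keep the section connected/irreducible). Iterating $\dim X - 1$ times produces a closed connected curve $C \subset X$ containing $S$.

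Once $C$ contains the support of $\zeta$, the cycle $\zeta$ is literally a zero-cycle on $C$, and since $\sum_i n_i[k(x_i):k] = 0$ (which is what $\alpha \in CH_0(X)^0$ records, at least after checking that the degree condition is intrinsic and transported correctly — here one should be slightly careful that ``degree zero'' on $C$ agrees with the condition defining $CH_0(X)^0$, using that $C$ and $X$ are both connected so that $k_C = k_X = k$ after the reductions already made in the paper), we get a class $\beta \in CH_0(C)^0$ with $\phi_* \beta = \alpha$ under the pushforward along the inclusion $\phi: C \hookrightarrow X$. This gives exactly the assertion.

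The main obstacle I expect is the Bertini step: ensuring that a general hyperplane section of a connected (possibly singular, possibly reducible) projective scheme of dimension $\geq 2$, constrained to pass through a prescribed finite set of closed points, remains \emph{connected} and drops dimension by exactly one, and that after $\dim X - 1$ steps one genuinely lands on a one-dimensional scheme rather than getting stuck on a lower-dimensional component. The connectedness of hyperplane sections of connected projective schemes of dimension $\geq 2$ is classical (a consequence of the theorem on formal functions / Stein factorization, or Bertini's connectedness theorem), and imposing passage through finitely many points only removes a bounded-codimension condition from a large linear system, so for $k$ infinite one can still choose the hyperplane avoiding the bad loci; the bookkeeping to make all of this simultaneously work is the part that needs care but is standard. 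A secondary, minor point is the passage from ``proper'' to a setting where hyperplane sections make sense, handled by Chow's lemma together with the observation that pushforward of zero-cycles is compatible with the birational modification away from a nowhere-dense locus, which one can arrange to miss $S$.
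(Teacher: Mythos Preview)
Your overall strategy matches the paper's: lift $\alpha$ to an actual zero-cycle, produce a connected curve through its support, and transport the degree-zero condition. The paper isolates the curve-finding step as a separate lemma (Lemma~\ref{lem:bertini}) and proves it by a different mechanism than yours: instead of passing to a projective model and slicing by hyperplanes, it covers $X$ by affine opens $U_i$, enlarges the finite set $E$ so that overlapping charts always share a point of $E$, invokes the Altman--Kleiman Bertini theorem \cite{AK} to get an irreducible curve $C_i\subset U_i$ through $E\cap U_i$ in each chart, and takes $C=\bigcup_i \overline{C_i}$. This avoids both the reduction to a projective model and the somewhat delicate question of whether a \emph{constrained} hyperplane section of a possibly singular connected scheme stays connected.

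Two points in your outline need more care. First, the Chow's-lemma step: you cannot ``arrange $S$ to miss'' the locus where $X'\to X$ fails to be an isomorphism, since $S$ is prescribed; you would instead have to lift each point of $S$ to $X'$, build $C'\subset X'$, push forward, and check that the image is still $1$-dimensional (e.g.\ by forcing $C'$ to meet the isomorphism locus). Second, the degree-zero check: connectedness of $C$ and $X$ does \emph{not} force $k_C=k_X=k$, since $k_X=H^0(X,\sO_X)$ can be a nontrivial finite extension of $k$. The paper handles this by observing that the pushforward $CH_0(\Spec k_C)\to CH_0(\Spec k_X)$ is multiplication by $[k_{C,\mathrm{red}}:k_{X,\mathrm{red}}]$ on $\Z$, so any lift of a degree-zero cycle automatically has degree zero on $C$.
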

\begin{proof}
Given a zero-cycle $\alpha = \sum_{i=1}^r n_i x_i$ 
of degree zero on $X$,
apply the following lemma
with $E=\{ x_1, \dots, x_r \}$ to obtain a zero-cycle $\beta$ on $C$ mapping to $\alpha$.
Since $k_{X,{\rm red}}\to k_{C,{\rm red}}$ is a finite field extension - say of degree $d$, the pushforward
$CH_0(\Spec k_C)\to CH_0(\Spec k_X)$ identifies with $\Z\xr{\cdot d}\Z$. Hence $\beta$ has degree zero.
\end{proof}

\begin{lem}\label{lem:bertini}
Suppose $k$ is an infinite field.
Let $X$ be a proper connected scheme over $k$
such that $\dim X \geq 2$
and let $E$ be a finite set of closed points on $X$. 
Then there exists 
a closed $1$-dimensional connected subscheme
$C \subset X$ which contains $E$.
\end{lem}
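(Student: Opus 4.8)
The statement is a Bertini-type connectivity result: inside a proper connected $k$-scheme $X$ of dimension $\geq 2$ we must find a connected curve $C$ passing through a prescribed finite set $E$ of closed points. The plan is to reduce to the situation where $X$ is projective (hence admits an embedding into some $\P^N_k$) by a Chow-type argument, and then to cut down by generic hyperplanes in a way that preserves connectedness and keeps $E$ inside the chosen subscheme. The main tool is the classical fact that a generic hyperplane section of a projective connected scheme of dimension $\geq 2$ is again connected; this is where infinitude of $k$ is used, since one needs enough hyperplanes defined over $k$.

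First I would handle the reduction to the projective case. A proper scheme is not necessarily projective, but by Chow's lemma there is a projective connected $k$-scheme $X'$ with a surjective projective birational morphism $f : X' \to X$; since $f$ is surjective, we may lift each point of $E$ to a closed point of $X'$, obtaining a finite set $E'$ with $f(E') = E$ (replacing points of $E'$ by closed points in their fibers if necessary). Note $\dim X' = \dim X \geq 2$. If we produce a connected $1$-dimensional closed subscheme $C' \subset X'$ containing $E'$, then $C := f(C')$ is a closed (by properness), connected (continuous image of a connected set), $1$-dimensional (birational $f$ is generically finite, so does not raise dimension, and $C$ contains the points of $E$, so is not a finite set) subscheme of $X$ containing $E$. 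So it suffices to treat $X$ projective and connected.

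Now embed $X \inj \P^N_k$. Choose homogeneous coordinates so that no point of $E$ lies on a fixed coordinate hyperplane, and consider hyperplanes $H \subset \P^N_k$ defined over $k$. Since $k$ is infinite, the hyperplanes over $k$ passing through all the finitely many points of $E$ form a nonempty linear system of projective dimension $N - \#E \geq 1$ as long as $N$ is large enough — and we may always re-embed via a Veronese to make $N$ as large as we wish, or argue directly that the relevant "bad" loci (hyperplanes through $E$ whose intersection with $X$ is disconnected, or drops dimension more than expected, or contains a component of $X$) are proper closed subsets of this linear system over $\bar{k}$, hence miss a $k$-point by infinitude of $k$. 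The key input is the Enriques–Zariski–Severi connectedness theorem: if $X$ is connected projective of dimension $\geq 2$, a generic hyperplane section $X \cap H$ is connected of dimension $\dim X - 1$; moreover imposing passage through the finite set $E$ is only finitely many linear conditions and does not destroy this for a generic such $H$. Iterating, after $\dim X - 1$ generic hyperplane cuts we reach a connected $1$-dimensional closed subscheme $C = X \cap H_1 \cap \dots \cap H_{\dim X - 1}$ containing $E$.

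The main obstacle is the connectivity of the hyperplane sections together with the requirement that they contain $E$: one must check that "connected" is a generic property within the subsystem of hyperplanes through $E$, not merely within the full system of all hyperplanes. This is handled by working over $\bar{k}$ where the classical Bertini connectedness statement applies to the restricted linear system (which still has positive dimension and whose base locus is the finite set $E$, hence of codimension $\geq 2$ when $\dim X \geq 2$), so the general member is connected; the locus of disconnected members is then a proper closed subscheme of a positive-dimensional rational variety, which therefore has a $k$-point since $k$ is infinite. Keeping track of dimension (not dropping by more than one, and not swallowing a whole component of $X$) is routine once $E$ is chosen to avoid a fixed coordinate hyperplane. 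Everything else — properness of images, lifting along the Chow morphism — is formal.
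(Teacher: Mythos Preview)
Your hyperplane-section strategy has a genuine gap: the connectedness theorem you invoke does not hold for an arbitrary connected projective scheme. Enriques--Severi--Zariski requires normality, or more generally the vanishing $H^1(X,\sO_X(-n))=0$ for $n\gg 0$. For a concrete counterexample, take $X\subset\P^4_k$ to be the union of the two planes $\{x_3=x_4=0\}$ and $\{x_0=x_1=0\}$, which meet only at the point $p=[0:0:1:0:0]$. This $X$ is connected projective of dimension $2$, yet a general hyperplane $H$ misses $p$ and then $X\cap H$ is two disjoint lines. No Veronese re-embedding repairs this---the image is still two varieties meeting at a single point---and Chow's lemma does not help either, since the projective model of a reducible proper scheme remains reducible and normalizing it would disconnect it. The lemma is stated for arbitrary proper connected $X$, and the paper uses this generality (for the non-smooth cases of Theorem~\ref{thm:general}), so your argument does not establish the lemma as written. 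For irreducible $X$ your approach can likely be pushed through, but that requires a separate reduction you have not supplied.

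The paper's argument sidesteps global connectedness theorems entirely. It covers $X$ by finitely many affine opens $U_i$, enlarges $E$ so that every nonempty $U_i\cap U_j$ contains a point of $E$, and then applies the Altman--Kleiman Bertini theorem in each affine $U_i$ to produce an irreducible curve $C_i\subset U_i$ through $E\cap U_i$. The union $C=\bigcup_i\overline{C_i}$ is connected because whenever $U_i\cap U_j\neq\emptyset$ the two closures share the chosen point of $E$ in $U_i\cap U_j$, and connectedness of $X$ links all the $U_i$ through chains of such overlaps. This builds connectedness by hand and works uniformly for any connected $X$.
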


This lemma is taken from \cite[13.5.6]{SS}.
For the reader's convenience, 
we include a brief account here.
We take a finite covering $X = \cup_{i \in I} U_i$
by affine open subschemes $U_i$ of $X$.
By enlarging $E$ if necessary,
we may assume the condition
\begin{equation}\label{eq:cond-ak}
\text{if $i, j \in I$ and $U_i \cap U_j \not= \emptyset$,
then there is $x \in E$ such that $x \in U_i \cap U_j$.}
\end{equation}
By \cite[Thm. 1]{AK} 
there is an irreducible closed $1$-dimensional subscheme $C_i \subset U_i$
containing $E\cap U_i$ for each $i \in I$.
Let $C_i'$ be the closure of $C_i$ in $X$
and let $C = \cup_{i \in I}^r C_i'$.
Then $C$ satisfies the required conditions by \eqref{eq:cond-ak}.
\qed

Now we prove Theorem \ref{thm:general} 
assuming $k$ is an infinite field.
We may assume $X$ is connected.
By \S \ref{sect:k-rf} (ii) and Remark \ref{rem:reduction-to-k},
it suffices to show that for any elements 
$a\in k\setminus\{0\}$, $m_i\in \sM_i(k)$ 
and any zero-cycle $\zeta=\sum_{i=1}^r n_i x_i\in Z_0(X)$ 
with $\sum_i n_i [k(x_i): k]=0$ we have
\begin{equation}\label{eq:to-be-killed2}
a\otimes\zeta\otimes m_1\otimes\ldots\otimes m_n=0 \quad \text{in }
T(\G_a,\ul{CH}_0(X)^0,\ul{\sM})(k). 
\end{equation}
By Lemma \ref{cor:bertini},
there is a closed $1$-dimensional subscheme $i : C \hookrightarrow X$
and a zero-cycle of degree zero $\zeta' \in CH_0(C)^0$ 
such that
$\zeta = i_{*}(\zeta')$.
The result of \S \ref{sect:curves2} implies that
\begin{equation}\label{eq:have-been-killed2}
a\otimes\zeta'\otimes m_1\otimes\ldots\otimes m_n=0 \quad \text{in }
T(\G_a,\ul{CH}_0(C)^0,\ul{\sM})(k).
\end{equation}
Now the left hand side of \eqref{eq:to-be-killed2}
is the image of \eqref{eq:have-been-killed2}
by the map induced by $i$, hence is trivial.

It remains to prove Theorem \ref{thm:general} when $k$ is a finite field.
Take a prime $\ell \not= p$
and a $\Z_{\ell}$-extension $k'/k$.
We have proven $\varinjlim_{L/k}T(\G_a,\ul{CH}_0(C)^0,\ul{\sM})(L)=0$,
where the limit is over all finite subextensions of $k'/k$.
(Indeed by the case of an infinite field an element  $\alpha$ in $T(\G_a,\ul{CH}_0(C)^0,\ul{\sM})(L)$ becomes
zero when pulled back to $T(\G_{a,k'},\ul{CH}_0(C_{k'})^0,\ul{\sM}_{k'})(k')$, 
notation as in Remark \ref{rem:reduction-to-k}.
But the corresponding relations involving $\alpha$ are already defined over a finite subextension of $k'/k$.)
By the usual norm argument we conclude that
$T(\G_a,\ul{CH}_0(C)^0,\ul{\sM})(k)$ is $\ell$-primary torsion,
but this group is annihilated by $p$ as well,
hence is trivial.
This completes the proof.
\qed

\subsection{Proof of Corollary  \ref{cor-to-thm:main}}
As before,
it suffices to show that for any elements 
$a\in k\setminus\{0\}$, $m_i\in \sM_i(k)$ 
and $\zeta \in A(k)$
\begin{equation}\label{eq:to-be-killed3}
a\otimes\zeta\otimes m_1\otimes\ldots\otimes m_n=0 \quad \text{in }
T(\G_a,A,\ul{\sM})(k). 
\end{equation}
Since $\zeta \in A(k)$ is a closed point of $A$,
we can regard $[\zeta - 0]$ as a zero cycle of degree zero on $A$,
i.e. $[\zeta - 0] \in \ul{CH}_0(A)^0(k)$.
By Theorem \ref{thm:main},
we get
\begin{equation}\label{eq:have-been-killed1}
a\otimes[\zeta-0]\otimes m_1\otimes\ldots\otimes m_n=0 \quad \text{in }
T(\G_a,\ul{CH}_0(A)^0,\ul{\sM})(k).
\end{equation}
Now the left hand side of \eqref{eq:to-be-killed3} 
is the image of \eqref{eq:have-been-killed1}
by the map induced by the Albanese map $\ul{CH}_0(A)^0 \to A$,
hence is trivial.
\qed


\begin{thebibliography}{a-b}

\bibitem[AK]{AK} 
A. Altman and S. Kleiman,
{\it Bertini theorems for hypersurface sections containing a subscheme}. 
Comm. Algebra {\bf 7} (1979),  775--790.

\bibitem[Ha]{Ha}
R. Hartshorne,
{\it Algebraic Geometry}.
Graduate Texts in Mathematics {\bf 52}, Springer-Verlag, 1977, xvi+496.

\bibitem[H]{H}
T. Hiranouchi,
{\it  An additive variant of Somekawa's K-groups and K\"ahler differentials}.
J. $K$-theory, {\bf 13} (2014), 481--516.

\bibitem[HK]{HK} 
A. Huber and B. Kahn, 
{\it The slice filtration and mixed Tate motives}.
Compositio Math. {\bf 142} (2006), 907--936.


\bibitem[IR]{IvRu}
F. Ivorra and K. R{\"u}lling,
$K$-groups of reciprocity functors.
Preprint 2012, \url{http://arxiv.org/abs/1209.1217}.

\bibitem[K]{KahnRF}
B. Kahn,
Foncteurs de {M}ackey \`a r\'eciprocit\'e.
Preprint, \url{http://arxiv.org/abs/1210.7577}.

\bibitem[KY]{KY}
B. Kahn and T. Yamazaki,
{\it Voevodsky's motives and Weil reciprocity},
Duke Math. J. {\bf 162} (2013), 2751--2796.


\bibitem[M]{Mumford}
D. Mumford,
{\it Abelian varieties},
Tata Institute of Fundamental Research Studies in Mathematics, No. 5, 
Oxford University Press, London 1970.

\bibitem[SS]{SS}
S. Saito and K. Sato,
{\it Algebraic cycles and \'etale cohomology} (in Japanese).
Maruzen (2012).

\bibitem[Se]{Se}
   J.-P. Serre,
  {\it Algebraic groups and class fields}.
   Graduate Texts in Mathematics {\bf 117}, Springer-Verlag, 1988, x+207.

\bibitem[So]{So} 
M. Somekawa, 
{\it On Milnor $K$-groups attached to semi-abelian varieties}, 
$K$-Theory {\bf 4} (1990), 105--119.


\bibitem[V]{voepre} 
V. Voevodsky, 
{\it Cohomological theory of presheaves with transfers}.
in Cycles, transfers, and motivic homology theories, 
Ann. of Math. Stud. {\bf 143}, 
Princeton Univ. Press, 2000, 87--137.

\bibitem[W]{weil} 
A. Weil, 
{\it Basic number theory}. Third edition. 
Die Grundlehren der Mathematischen Wissenschaften, Band 144. Springer-Verlag, 1974.

\end{thebibliography}
\end{document}